 \newtheorem{thm}{Theorem}[section]
 \newtheorem{cor}[thm]{Corollary}
 \newtheorem{lem}[thm]{Lemma}
 \newtheorem{prop}[thm]{Proposition}
 \theoremstyle{definition}
 \newtheorem{defn}[thm]{Definition}
 \theoremstyle{remark}
 \newtheorem{rem}[thm]{Remark}
 \numberwithin{equation}{section}
\def\dis{{\rm dis}}
\def\dim{{\rm dim}}
\def\codim{{\rm codim}}
\def\N{\mathcal{N}}
\def\R{\mathcal{R}}
\def\NN{\mathbb{N}}
\def\B{\mathcal{B}}
\def\CC{\mathbb{C}}
\begin{document}

%
%
%
%
%
%
%
%
%

\title[New properties  of some classes of Saphar type operators]
 {New properties  of some classes of Saphar type operators}



\author[Ayoub Ghorbel]{Ayoub Ghorbel}

\address{%
Dynamical and Combinatorial Systems LR13ES20, Faculty of Sciences of Sfax, University of Sfax, Tunisia}
\email{aghorbel39@gmail.com}

\author[Sne\v zana \v C. \v Zivkovi\'c-Zlatanovi\'c]{Sne\v zana \v C. \v Zivkovi\'c Zlatanovi\'c}

\address{%
University of Ni\v s,
Faculty of Sciences and Mathematics, Ni\v s, Serbia}

\email{snezana.zivkovic-zlatanovic@pmf.edu.rs, mladvlad@mts.rs}
\subjclass{15A09, 47A05}

\keywords{Banach space;  Saphar  operators;  left  and   right  invertible operators; left and right Fredholm operators; decomposition of operators; powers of  operators}



\begin{abstract}
This  paper  explores additional properties of some classes of Saphar type operators, namely left Drazin invertible, essentially left Drazin invertible, right Drazin invertible, and essentially right Drazin invertible operators on Banach spaces, building upon the groundwork laid in \cite{GM} and \cite{ZS}. Specifically, we propose alternative definitions for these operators and characterize them with a new type of operator decomposition, providing a deeper understanding of their  properties. Furthermore,  we   investigate their behavior under powers.  The operators we study are distinguished from other operators bearing the same name in existing literature, see \cite{Aiena}, \cite{Q. Jiang}. By employing more refined definitions, we uncover a broader range of properties for these operators, setting them apart from their counterparts in the literature.

\end{abstract}

\maketitle

\section{Introduction}

Let $ X $   be  an infinite-dimensional complex Banach space and  let $ \mathcal{B}(X) $ be the  algebra of  all linear bounded operators on $ X$.
For $ T \in \mathcal{B}(X) $, let $\mathcal{N}(T) $ denote its kernel and  $\mathcal{R}(T) $ denote its range, and let  $a(T)$ (resp., $d(T)$, $a_e(T)$, $d_e(T)$) denote its ascent (resp., descent,  essential ascent, essential descent).
Recall  that an operator $ T \in \mathcal{B}(X) $ is called Drazin invertible if there exist a nonnegative integer $ k $  and a (unique) bounded  operator $ S $  such that
$$ ST = TS, STS = S, ST^{k+1} = T^k.$$
The smallest $ k $ for which the definition is valid is called the index of $ T $.
It is well known that Drazin invertible operators  can be characterized in different ways, for instance, we have equivalences between the  following statements:
 \begin{itemize}
       \item  $ T  $ is Drazin invertible.
    \item $ a(T) = d(T) < \infty$.
       \item  There exist   a pair $(M,N) \in Red(T)$  such that $T_{M} $ is  invertible  and  $T_{N} $ is  nilpotent.
   \end{itemize}
   The last characterization is called  the Kato decomposition of Drazin invertible operators. In addition to that, Drazin invertible operators can be also characterized in the following way

\begin{thm}{\upshape\cite{king}}\label{1000} Let $ T \in \mathcal{B}(X) $. Then, the  following two statements are equivalent:
\begin{enumerate}[label={\upshape(\roman*)}]

\item $ T $ is  Drazin invertible.
\item There exist operators $ S, R \in \mathcal{B}(X) $ such that:
\begin{enumerate}[label={\upshape(\alph*)}]
\item $ T = S + R $ and $ SR = RS = 0 $.
\item $ S $ is  Drazin invertible with index $ 0 $ or  $1$.
\item $ R $ is nilpotent.
\end{enumerate}
\end{enumerate}
\end{thm}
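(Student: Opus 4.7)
The plan is to derive both directions from the Kato decomposition characterization of Drazin invertibility stated in the third bullet above (invertible summand $\oplus$ nilpotent summand).

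For the direction $(i)\Rightarrow(ii)$, I would take a Kato decomposition $T = T_M \oplus T_N$ with $T_M$ invertible and $T_N$ nilpotent, and set $S := T_M \oplus 0_N$ and $R := 0_M \oplus T_N$. Then $T = S + R$ by construction; the products $SR$ and $RS$ vanish because each of $S,R$ is zero on the summand on which the other is supported; $R$ is nilpotent since $T_N$ is; and $S$ is Drazin invertible of index at most $1$, its Drazin inverse being $T_M^{-1} \oplus 0_N$. This direction is essentially a bookkeeping step once the Kato decomposition of $T$ is in hand.

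For $(ii)\Rightarrow(i)$, I would first apply the Kato characterization to $S$. Since $S$ has Drazin index at most $1$, there is a pair $(M',N')\in Red(S)$ with $S_{M'}$ invertible and $S_{N'}$ nilpotent of index at most $1$; the latter forces $S_{N'}=0$, so $S = S_{M'}\oplus 0_{N'}$, and in particular $\N(S)=N'$ and $\R(S)=M'$. Next I would exploit $SR = 0$ and $RS = 0$: the former yields $\R(R)\subset \N(S)=N'$, while the latter yields $M'=\R(S)\subset \N(R)$. Together these say that both $M'$ and $N'$ are $R$-invariant and that $R\!\restriction_{M'}=0$, so $R = 0_{M'} \oplus R_{N'}$ with $R_{N'}$ nilpotent (because $R$ is). Adding the two block forms gives
\[
T \;=\; S + R \;=\; S_{M'} \oplus R_{N'},
\]
an invertible summand direct sum a nilpotent summand; this is a Kato decomposition of $T$, and $T$ is Drazin invertible.

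The main point of the argument is the second direction, specifically the step that uses $SR=RS=0$ in tandem with the decomposition $S=S_{M'}\oplus 0_{N'}$ to conclude that $R$ is simultaneously block-diagonal with respect to $X=M'\oplus N'$ and vanishes on $M'$. The rest is a routine assembly: once $R$ is known to respect this splitting, the Kato decomposition of $T$ drops out immediately and the third characterization recalled above finishes the proof.
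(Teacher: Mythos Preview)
The paper does not prove Theorem~\ref{1000}; it is quoted from \cite{king} and used only as motivation for the analogous results in Section~5. So there is no ``paper's own proof'' to compare against directly. Your argument is correct in both directions.

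It is nevertheless worth noting how your approach relates to what the paper does for the analogues. For $(i)\Rightarrow(ii)$, your construction $S=T_M\oplus 0_N$, $R=0_M\oplus T_N$ is exactly the construction the paper uses in the proof of Theorem~\ref{12+} (and in the commented-out alternative proof of Theorem~\ref{12}). For $(ii)\Rightarrow(i)$, your argument is more elementary than the route the paper takes for its analogues: in Theorems~\ref{perturbation}--\ref{r2+} the paper does not attempt to show that the Kato decomposition of $S$ is $R$-reducing, but instead works at the level of the subspaces $\N(T)\cap\R(T^n)$ and $\R(T)+\N(T^n)$, using $T^n=S^n$ for large $n$ together with cited stability results for ascent/descent under commuting nilpotent perturbations. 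Your shortcut works cleanly here because the Drazin index of $S$ is at most $1$, which forces $S_{N'}=0$ and hence $\N(S)=N'$, $\R(S)=M'$; this identification is what makes $SR=RS=0$ immediately yield $R$-invariance of $M'$ and $N'$. For the one-sided and essential versions treated in Section~5 that identification is not available, which is why the paper's more indirect method is needed there.
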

In \cite{GM}, the concept of Drazin invertibility is extended through the introduction of two new classes of operators: left Drazin invertible and right Drazin invertible operators, defined as follows:

\begin{defn}  {\upshape An operator  $ T \in \mathcal{B}(X) $ is called} left Drazin invertible {\upshape if $ p := a(T) < \infty $ and  $ \mathcal{R}(T) + \mathcal{N}(T^p) $ is topologically complemented, while $ T \in \mathcal{B}(X) $  is called} right Drazin invertible {\upshape if $ q := d(T) < \infty $ and  $ \mathcal{N}(T) \cap \mathcal{R}(T^q) $ is  topologically complemented.}
\end{defn}

Left and right Drazin invertible operators possess several notable properties, the most significant of which, akin to Drazin invertible operators, is the Kato decomposition property.
 \begin{thm}\label{left}{\upshape\cite{GM}}
Let $ T \in \mathcal{B}(X) $. Then $ T  $ is left (right) Drazin invertible if and only if
there exist  a pair $(M,N) \in Red(T)$   such that  $T_{M} $ is  left (right) invertible  and  $T_{N} $ is  nilpotent of degree $a(T)$ ($d(T)$).
   \end{thm}

In \cite{ZS}, broader classes of left and right Drazin invertible operators, termed essentially left Drazin invertible and essentially right Drazin invertible operators, are introduced as follows:

\begin{defn}\label{df1} \rm An operator  $T\in \mathcal{B}(X)$ is said to be  essentially left
 Drazin invertible if
 $a_e(T) < \infty$  and the subspace  $ \mathcal{R}(T)+\mathcal{N}(T^{\dis (T)})$ is topologically complemented, while
   $T\in \mathcal{B}(X)$ is said to be  essentially right
 Drazin invertible if
 $d_e(T) < \infty$ and   $ \mathcal{N}(T)\cap \mathcal{R}(T^{\dis (T)})$ is topologically complemented, where $\dis (T)$ is the  degree of stable iteration of $T$.
\end{defn}
Like left (resp., right) Drazin invertible operators, essentially left (resp., right) Drazin invertible operators are distinguished by the Kato decomposition property: they can be expressed as a direct sum of a left (resp., right) Fredholm operator and a nilpotent operator.

 \begin{thm}\label{Kato_essen}{\upshape\cite{ZS}}
Let $ T \in \mathcal{B}(X) $. Then $ T  $ is essentially  left (right) Drazin invertible if and only if
there exist  a pair $(M,N) \in Red(T)$   such that  $T_{M} $ is  left (right) Fredholm  and  $T_{N} $ is  nilpotent.
   \end{thm}
Consider now the sets
 \begin{eqnarray*}
   R_1 &\! =\! & \{T\! \in \! \B(X) :  a(T)\! <\! \infty,\!  \ \mathcal{R}(T^{n})\ {\rm is\! \ closed\! \ for\! \ every}\! \ n\! \ge\!  a(T)\},\nonumber \\
  R_2 &\! =\! & \{T\! \in\!  \B(X): d(T)\! <\! \infty,\!  \ \mathcal{R}(T^{n})\ {\rm is\! \ closed\! \ for\! \ every}\ n\ge d(T)\},\nonumber \\
  R_3 \! &=\! & \{T\! \in\!  \B(X): a_e(T)\! <\! \infty,\!  \ \mathcal{R}(T^{n})\ {\rm is\! \ closed\! \ for\! \ every}\! \ n\! \ge\!  a_e(T)\},
   \\
  R_4 &\! =\! & \{T\! \in\!  \B(X): d_e(T)\! <\! \infty,\!  \ \mathcal{R}(T^{n})\ {\rm is\! \ closed\! \ for\! \ every}\! \ n\! \ge\!  d_e(T)\}.
 \end{eqnarray*}

It is well known that a  bounded linear  operator $T$ acting on a Hilbert space   $T$ belongs to $R_1$ (resp., $ R_2$, $ R_3$, $ R_4$) if and only if  it can be decomposed into  a direct sum of a nilpotent operator and  a left invertible one  (resp.,  right invertible, left Fredholm,  right Fredholm)   \cite[Theorem 3.12]{P8}. It was shown in \cite{ZS} that this result does not hold in the general Banach space setting: the class of bounded linear  operators on a Banach space  which are characterized  by the property that they can be decomposed into a direct   sum of a nilpotent operator and   a left invertible (resp., right invertible,  left Fredholm,  right Fredholm) one, i.e. the class of left (resp., right,  essentially left,  essentially right) Drazin invertible operators,   can be  a proper subset of the class $R_1$  (resp., $ R_2$, $ R_3$, $ R_4$).
For a bounded linear operators $T$ acting on a Banach space and belonging to the class $R_1$ (resp., $R_2$, $R_3$, $R_4$) to be decomposed into a direct sum of a nilpotent operator and   a left invertible one  (resp., right invertible,  left Fredholm,  right Fredholm),  it is necessary and sufficient that $T$ is of Saphar type, i.e. that   the subspaces $ \mathcal{N}(T)\cap \mathcal{R}(T^{\dis (T)})$ and  $ \mathcal{R}(T)+\mathcal{N}(T^{\dis (T)})$ are topologically complemented, where $\dis (T)$ is the  degree of stable iteration of $T$ (see \cite[Theorems 4.13, 4.15,  Corolarries 4.23. 4.24]{ZS}, \cite[Theorems 3.2,3.7]{GM}).
Even though operators from the class  $R_1$ (resp., $ R_2$, $ R_3$, $ R_4$)  were previously known in other literature   as left Drazin invertible (resp., right Drazin inverible, essentially left Drazin invertible, essentially right Drazin invertible) operators, we refer to them in our work  as  upper Drazin invertible operators (resp., lower Drazin invertible, essentially upper Drazin invertible, essentially lower Drazin invertible).   We observed that the name left Drazin invertible  (resp., right Drazin inverible, essentially left Drazin invertible, essentially right Drazin invertible) operators is more appropriate for operators that can be  represented as a direct sum  of a nilpotent operator and   a left invertible  (resp., right invertible,  left Fredholm,  right Fredholm) operator \cite{GM}, \cite{ZS}.\\

In this paper, we seek to develop further properties of left Drazin invertible, essentially left Drazin invertible, right Drazin invertible, and essentially right Drazin invertible operators on Banach spaces. We begin our study with providing necessary and sufficient conditions for an upper (resp., lower, essentially upper, or essentially lower) Drazin invertible operator to be left (resp., right, essentially left, or essentially right) Drazin invertible, thereby distinguishing these newly introduced concepts from traditional ones.
We then present several characterizations that serve as alternative definitions for these new operators. To this end, we introduce and prove two new foundational lemmas that establish sufficient conditions for a paracomplete subspace to be topologically complemented. These lemmas are  particularly instrumental  in  establishing that  necessary conditions for   an operator  $T\in B(X)$ to be essentially left (resp., essentially right) Drazin invertible are that $\R(T^n)$ and $\N(T^n)$ are topologically complemented for each $n\ge a_e(T)$ (resp., $n\ge d_e(T)$), which is not the case with operators from the class $R_3$ (resp., $R_4$)
  on an arbitrary Banach space. 
 Subsequently, we introduce a new decomposition for Saphar-type operators. This decomposition involves splitting the operator into two parts: a part of Saphar type and a nilpotent part. This decomposition provides a unique characterization of Saphar-type operators, offering a new perspective on their structure. Finally in the last section, we prove that  an operator $T\in B(X)$ is (essentially) left (right) Drazin invertible if and only if its power $T^m$
is also (essentially) left (right) Drazin invertible for some positive integer $m$. This result  is also based on the previously mentioned  two lemmas and  highlights the stability of the (essentially) left (right) Drazin invertibility property under the operation of taking powers.
\section{Definitions and preliminary results}
Let $ X $   be  an infinite-dimensional complex Banach space. 
Here $\mathbb{N} \, (\mathbb{N}_0)$ denotes the set of all positive
(non-negative) integers, and $\CC$  denotes the set of all complex numbers.
 Recall the definitions of the ascent $ a(T) $ and  the descent $ d(T) $ of $ T $: we say that $ T $  has finite ascent if the chain $ \mathcal{N}(T^0) \subseteq \mathcal{N}(T)  \subseteq \mathcal{N}(T^2)  ...$ becomes constant after a finite number of steps.  The smallest  $ n\in \mathbb{N}_0$  such that $ \mathcal{N}(T^n) = \mathcal{N}(T^{n+1}) $  is defined to be $ a(T) $. The descent is defined similarly for the chain $  \mathcal{R}(T^0)  \supseteq \mathcal{R}(T)  \supseteq \mathcal{R}(T^2) ...$. We define the infimum of the empty set to be  $\infty$.
For $T\in \mathcal{B}(X)$ and  $n\in\NN_0$ we set
$\alpha_n(T)=\dim (\N(T)\cap \R(T^n))$
and
   $\beta_n(T)=\codim (\R(T)+\N(T^n)).$ The smallest integer $ n$  such that $\alpha_{n}(T)<\infty  $  is defined to be   the  essential ascent $ a_e(T) $ of $T$, while
   the smallest integer $ n$  such that $\beta_{n}(T)<\infty  $  is defined to be   the  essential descent $ d_e(T) $ of $T$.
   If there is $d\in\NN_0$ for which the sequence $(\N(T)\cap \R(T^n) )$ is  constant for $n\ge d$, then $T$ is said to have uniform descent for $n\ge d$.
   For $T\in \mathcal{B}(X)$ the  degree of stable iteration $\dis(T)$  is defined by:
$$
\dis(T)=\inf\{n\in\NN_0:m\ge n, m\in\NN\Longrightarrow \N(T)\cap \R(T^n)= \N(T)\cap \R(T^m)\}.
$$
An operator $T\in \B(X)$ is said to be {\it quasi-Fredholm of degree} $d$  if there exists a $d\in\NN_0$ such that
$\dis(T)=d$  and
 $\R(T^n)$ is closed for each $n\ge d$. 
An operator $T \in
\B(X)$ is   {\em Kato} if $\R(T)$ is closed and $\N(T) \subset
\R(T^n)$ for every $ n \in \mathbb{N}_0$. We recall that $T\in\B(X)$ is Kato if and only if $T$ is quasi-Fredholm of degree $0$.

An operator $T\in \B(X)$ is called upper (resp., lower) semi-Fredholm,
or $T\in\Phi_+(X)$ (resp., $T\in\Phi_-(X)$) if
 $\alpha (T):=\dim \N(T)<\infty$ and $\R(T)$  is closed (resp., $\beta (T):=\codim\R(T)<\infty$),
and it is said to be relatively regular 
if
 $\R(T)$ and $\N(T)$
 are topologically  complemented subspaces of  $X$. An operator $T\in \B(X)$ is called  left (resp., right) Fredholm, or
 $T\in\Phi_l(X)$ (resp., $T\in\Phi_r(X)$) if it is relatively regular  and upper (resp., lower)
 semi-Fredholm.
\

If $T\in\B(X)$ is a relatively regular Kato operator, then we say that $T$ is a {\it Saphar operator}. The degree of a nilpotent operator $T$ is the smallest $d\in\NN_0$ such that $T^d=0$.
Let $d\in\NN_0$. An operator $T\in \B(X)$ is said to be {\it of Saphar type  (of degree} $d$) if there exists a pair $(M,N) \in Red(T)$ such that
$T_M$ is Saphar and $T_N$ is nilpotent (of degree $d$). In that case, since $\dis(T_N)=a(T_N)=d(T_N)=d$ and $\dis(T_M)=0$, it follows that $\dis(T)=\max\{\dis(T_M),\dis(T_N)\}=d$.
We recall that $T\in\B(X)$ is of Saphar type  of degree $d$ if and only if it is  a quasi-Fredholm operator with  $\dis (T)=d$  and the subspaces $\R(T)+\N(T^d)$ and $\N(T)\cap \R(T^{d})$  are topologically  complemented \cite[Theorem 4.2]{ZS}, \cite[Theorem 4.1]{mine2}. An operator $T\in \B(X)$ is said  to admit a {\em generalized Saphar decomposition} if  there exists a pair $(M,N)\in Red(T)$ such that $T_M$ is Saphar and $T_N$ is quasinilpotent \cite{MZ}.

An operator $T\in \B(X)$
is meromorphic if its non-zero spectral points are poles of its resolvent. We shall say $T\in \B(X)$ admits a {\em generalized Saphar-meromorphic decomposition} if  there exists a pair $(M,N)\in Red(T)$ such that $T_M$ is Saphar and $T_N$ is meromorphic.

\

Let  $ F $, $ G $ and $ H $ be subspaces of $ X $. We will copiously use the following modular law: \begin{equation}\label{low}
  {\rm if }\ F \subseteq H,\ {\rm  then\ }  (F + G) \cap H = F + (G \cap H ).
  \end{equation}
%

\medskip

The following lemma, upon which we will rely, was originally demonstrated in \cite[Lemma 2.3]{GM}  and later generalized in \cite[Lemma 3.3]{ZS}.
\begin{lem}\label{mine} Let $ M $ and $ N $ be closed  in  $ X $ such that $ X = M \oplus N $.  Let $ F $ be a subspace of $ M $ and $ G $ be a subspace of $ N $. Then, $ F \oplus G $ is topologically complemented in $ X $ if and only if $ F $ is topologically complemented  in $ M $ and $ G $ is topologically complemented in $ N $.
\end{lem}
\begin{lem}\label{suma komplamentarnog i konacno-dimenzionalnog} \cite{Sv} Let $M, N$ be subspaces of $X$ such that $M$ is   topologically complemented  and $N$ is finite-dimensional. Then $M+N$ is a topologically complemented subspace of $X$.
\end{lem}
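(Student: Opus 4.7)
The plan is to exploit the natural projection associated with the topological complement of $M$ and reduce the problem to complementing a finite-dimensional subspace inside a closed subspace of $X$.

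First I would fix a closed complement $M'$ of $M$ in $X$ and let $P\in\B(X)$ denote the continuous projection onto $M'$ along $M$ (so $\ker P=M$ and $P|_{M'}=I$). Since $N$ is finite-dimensional, $P(N)$ is a finite-dimensional subspace of the Banach space $M'$. The key identity I would establish is
\[
M+N = M + P(N).
\]
The inclusion $\supseteq$ holds because for every $n\in N$ we have $P(n)=n-(n-P(n))$ with $n-P(n)\in\ker P = M$, so $P(n)\in M+N$; the inclusion $\subseteq$ is similar. Moreover, since $P(N)\subseteq M'$ and $M\cap M'=\{0\}$, this sum is in fact direct: $M+N = M\oplus P(N)$.

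Next, because $P(N)$ is a finite-dimensional subspace of the Banach space $M'$, it admits a closed topological complement $Q$ in $M'$, i.e.\ $M'=P(N)\oplus Q$. Then
\[
X = M\oplus M' = M\oplus P(N)\oplus Q = (M+N)\oplus Q,
\]
so $Q$ is a closed topological complement of $M+N$ in $X$; in particular $M+N$ is closed.

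There is no real obstacle here; the only ingredients are the continuity of the projection $P$ associated with a topological direct sum and the standard fact that every finite-dimensional subspace of a Banach space is topologically complemented. The one place to be careful is verifying the modular-type identity $M+N=M+P(N)$, but this follows immediately from $n-P(n)\in M$ for each $n\in N$.
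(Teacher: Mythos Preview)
Your argument is correct: the identity $M+N=M\oplus P(N)$ with $P$ the projection onto $M'$ along $M$, followed by complementing the finite-dimensional subspace $P(N)$ inside $M'$, gives exactly what is needed, and the closedness of $M+N$ (as the sum of a closed subspace and a finite-dimensional one) makes the resulting algebraic decomposition $X=(M+N)\oplus Q$ a topological one.

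The paper itself does not give an argument here at all; it simply invokes \cite[Lemma~2.6]{Sv}. So your proof is not merely a different route but an actual self-contained replacement for an external citation. What your approach buys is independence from Svensson's paper and transparency: the reader sees directly that only two standard facts are used (continuity of the projection associated with a topological complement, and complementability of finite-dimensional subspaces). The paper's approach, by contrast, keeps the exposition short at the cost of deferring the content elsewhere.
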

\begin{lem}\label{glavna za desne} \cite{ZS}  Let $M$ be a topologically complemented subspace of $X$ and let  $N$ be a closed subspace of $X$ such that $\codim N<\infty$. Then $M\cap N$ is topologically complemented in $X$.
\end{lem}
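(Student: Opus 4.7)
The plan is to reduce the claim to showing that $M\cap N$ has finite codimension in $M$, whereupon complementedness in $X$ will follow by concatenating decompositions.

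First, I would exhibit the natural linear map
$$\varphi\colon M/(M\cap N) \longrightarrow X/N,\qquad m+(M\cap N)\mapsto m+N,$$
which is well defined and injective (if $m\in M$ satisfies $m\in N$, then $m\in M\cap N$). Consequently
$$\dim\bigl(M/(M\cap N)\bigr)\le \codim N<\infty,$$
so $M\cap N$ is a closed, finite-codimensional subspace of $M$. It therefore admits a finite-dimensional topological complement $G$ in $M$, giving $M=(M\cap N)\oplus G$.

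Next, using that $M$ is topologically complemented in $X$, fix a closed subspace $M'\subset X$ with $X=M\oplus M'$. Combining the two decompositions yields
$$X=(M\cap N)\oplus (G+M'),$$
and $G+M'$ is closed because $G$ is finite-dimensional and $M'$ is closed. Hence $M\cap N$ is topologically complemented in $X$. Equivalently, one can invoke Lemma~\ref{GM} with the subspaces $F:=M\cap N\subset M$ and $\{0\}\subset M'$, each of which is complemented in its respective summand.

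The main obstacle is essentially cosmetic: once the inclusion $M/(M\cap N)\hookrightarrow X/N$ is observed, the rest of the argument is routine linear algebra together with standard Banach-space facts. The only point requiring care is verifying that $G+M'$ is genuinely closed in $X$, which reduces to the classical observation that a finite-dimensional extension of a closed subspace is closed (and may alternatively be deduced from Lemma~\ref{suma komplamentarnog i konacno-dimenzionalnog}).
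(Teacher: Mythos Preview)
Your argument is correct. Note, however, that the paper does not supply its own proof of this lemma---it is simply quoted from \cite{ZS}---so there is no in-paper argument to compare against. The route you take (embedding $M/(M\cap N)$ into $X/N$ to force $M\cap N$ to have finite codimension in $M$, then splicing a finite-dimensional complement $G$ onto a topological complement $M'$ of $M$) is the standard one; the closedness of $G+M'$ is indeed the only point needing a word, and your justification via the finite-dimensionality of $G$ (or via Lemma~\ref{suma komplamentarnog i konacno-dimenzionalnog}, or via Lemma~\ref{GM}) is sound.
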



\begin{lem}\label{l3}{\upshape\cite{mine2}}
Let $ T \in \mathcal{B}(X)$ be such that $ \mathcal{R}(T) $ is topologically complemented. If $ M $ is  topologically complemented and $ \mathcal{N}(T) \subseteq  M $,  then $ T(M) $ is topologically complemented.
\end{lem}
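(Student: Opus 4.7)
The plan is to reduce the complementation of $T(M)$ in $X$ to the complementation of the subspace $M/\N(T)$ inside the quotient Banach space $X/\N(T)$, using the open mapping theorem together with the hypothesis $\N(T)\subseteq M$, and then to assemble this with the complementation of $\R(T)$ in $X$ via Lemma \ref{GM}.

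First I would write $X=\R(T)\oplus Y$ with $Y$ closed; in particular $\R(T)$ is closed. The operator $T$ then factors through a continuous bijection $\bar T\colon X/\N(T)\to \R(T)$ between Banach spaces, which the open mapping theorem upgrades to a topological isomorphism. Under $\bar T$, the closed subspace $M/\N(T)$ of $X/\N(T)$ (closed because $M$ is closed and contains $\N(T)$) is sent precisely onto $T(M)$.

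Next I would write $X=M\oplus N$ with $N$ closed and let $P\colon X\to X$ denote the continuous projection onto $M$ along $N$. Because $\N(T)\subseteq M$, we have $P(\N(T))\subseteq \N(T)$, so $P$ descends to a bounded projection $\bar P\colon X/\N(T)\to X/\N(T)$ defined by $\bar P(x+\N(T))=P(x)+\N(T)$, whose range is exactly $M/\N(T)$. Hence $M/\N(T)$ is topologically complemented in $X/\N(T)$. Transporting this along the isomorphism $\bar T$, the subspace $T(M)=\bar T(M/\N(T))$ is topologically complemented (and, in particular, closed) inside $\R(T)$.

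To conclude, I would apply Lemma \ref{GM} to the decomposition $X=\R(T)\oplus Y$: the subspace $T(M)$ is complemented in $\R(T)$ and $\{0\}$ is trivially complemented in $Y$, so $T(M)=T(M)\oplus\{0\}$ is topologically complemented in $X$. The only subtlety I anticipate is verifying that $\bar P$ is well defined and continuous, which is routine once the inclusion $P(\N(T))\subseteq \N(T)$ is observed; everything else is a formal use of the open mapping theorem and of Lemma \ref{GM}.
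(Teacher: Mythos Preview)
The paper does not supply its own proof of this lemma; it is quoted verbatim from \cite{mine2}. Your argument is correct: factoring $T$ through the topological isomorphism $\bar T\colon X/\N(T)\to\R(T)$ given by the open mapping theorem, descending the projection $P$ onto $M$ to the quotient (which is legitimate precisely because $\N(T)\subseteq M$ forces $P$ to act as the identity on $\N(T)$), and then invoking Lemma~\ref{GM} to pass from complementation of $T(M)$ in $\R(T)$ to complementation in $X=\R(T)\oplus Y$ is a clean and complete route. Since there is no in-paper proof to compare against, nothing further is needed.
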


From Theorem 3.3 in \cite{GM} and its proof it follows that the following characterizations of left Drazin invertible operator hold:

\begin{thm}\label{t1}  For $T\in \mathcal{B}(X) $ the following statements are equivalent:
\begin{enumerate}[label={\upshape(\roman*)}]
\item   $ T  $ is left Drazin invertible.
\item $ a(T) < \infty $ and $ \mathcal{N}(T^{n}) $ and $ \mathcal{R}(T^{n}) $ are topologically complemented for every $n\ge a(T) $.
\item $  a(T) < \infty $ and there exists $n\ge a(T)$ such that $ \mathcal{N}(T^{ n}) $ and $ \mathcal{R}(T^{ n+1}) $ is topologically complemented.
\end{enumerate}
\end{thm}

\section{New Characterizations and Decompositions for (Essentially) Left and (Essentially) Right Drazin Invertible Operators}

To deepen our understanding  of (essentially) left and right  Drazin invertibility of operators and to clearly differentiate these operators from related concepts in the literature, we present the following theorem which  provides  necessary and sufficient conditions for an upper (resp., lower, essentially upper,  essentially lower) Drazin invertible operator to be left (resp., right, essentially left,  essentially right) Drazin invertible.

\begin{thm}\label{nova} Let $T\in\B(X)$ be an upper (resp., lower, essentially upper, essentially lower) Drazin invertible operator. Then the following conditions are equivalent:

\begin{enumerate}[label={\upshape(\roman*)}]
\item   $ T  $ is  left (resp., right, essentially left, essentially right) Drazin invertible.
\item $ T$ is of Saphar type.
\item $ T $  admits a  generalized Saphar decomposition.
\item $ T $  admits a  generalized Saphar-meromorphic decomposition.
\end{enumerate}
\end{thm}
\begin{proof} (i)$\Longleftrightarrow$(ii) This equivalence follows from   \cite[Corollary 4.23]{ZS} since $a(T)<\infty$.

(ii)$\Longrightarrow$(iii)$\Longrightarrow$(iv) Obvious.

(iv)$\Longrightarrow$(ii) Suppose that $ T $  admits a  generalized Saphar-meromorphic decomposition. Then there exists a pair $(M,N)\in Red(T)$ such that $T_M$ is Saphar and $T_N$ is meromorphic. Since $T_N$ is meromorphic, it follows that $T_N-\lambda I_N$ is Drazin invertible for every $\lambda\in\CC^*$, so that in particular $T_N-\lambda I_N$ is upper Drazin invertible for every $\lambda\ne 0$. On the other hand, since $T$ is upper Drazin invertible, then  $T_N$ is also upper Drazin invertible. Therefore, $\{\lambda\in\CC:T_N-\lambda I_N\ {\rm is\ not\ upper\ Drazin\ invertible}\}=\emptyset$, which, according to \cite[Corollary 3.5]{Q. Jiang} implies that the Drazin spectrum of $T_N$ is empty. Thus $T_N$ is Drazin invertible, and hence there  exists a pair $(N_1,N_2)\in Red(T_N)$ such that $T_{N_1}$ is invertible  and $T_{N_2}$ is nilpotent. Applying \cite[Lemma 3.11]{ZS}, we obtain that $T_M\oplus T_{N_1}$ is Saphar. Consequently, $T$ is of Saphar type.

The rest of the assertions can be proved similarly.
\end{proof}

Therefore, an operator $T\in\B(X)$ is left (resp., right, essentially left, essentially right) Drazin invertible if and only if $T$ is upper (resp., lower, essentially upper, essentially lower) Drazin invertible and  admits a  generalized Saphar-meromorphic decomposition.

There exists  a  bounded linear  operator $T$ acting on a Banach space    that is bounded below (resp., surjective)  with a non-complemented range (resp., null-space) \cite[Example 1.1]{Burlando}. This operator is upper (resp., lower)  Drazin invertible, and hence essentially upper (resp., essentially lower) Drazin invertible. Also, this operator is of Kato type,  but it is    not of Saphar type (see \cite[p. 170]{ZS}), and hence according to Theorem \ref{nova} it does not admit a generalized Saphar decomposition nor a generalized Saphar-meromorphic decomposition. This example shows that  for an  upper (resp., lower, essentially upper, essentially lower) Drazin invertible operator to be  left (resp., right, essentially left, essentially right) Drazin invertible it is not enough to be of Kato type. Also this example shows that
 the condition that $T$  admits a generalized Saphar-meromorphic decomposition in the previous assertion can not be omitted.

\medskip

Having established how left (resp., right, essentially left, essentially right) Drazin invertible operators are distinguished from their counterparts in the literature, we now turn our attention to presenting new characterizations of these operators. We begin by offering characterizations that serve as alternative definitions for right (resp., essentially left, essentially right) Drazin invertible operators.

\begin{thm}\label{t2}

Let $ T \in \mathcal{B}(X) $. The following statements are equivalent:
\begin{enumerate}[label={\upshape(\roman*)}]
\item   $ T  $ is right Drazin invertible.
\item $ d(T) < \infty $ and $ \mathcal{N}(T^{n}) $ and $ \mathcal{R}(T^{n}) $ are topologically complemented for every $n\ge d(T) $.
\item $  d(T) < \infty $ and there exists $n\ge d(T)$ such that $ \mathcal{N}(T^{ n+1}) $ and $ \mathcal{R}(T^{ n}) $ are topologically complemented.
\end{enumerate}
\end{thm}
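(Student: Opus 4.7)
The plan is to mirror the structure of the proof of Theorem \ref{t1}, using the dual Lemma \ref{l3} in place of Lemma \ref{comp2}.

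\textbf{(i)$\Longrightarrow$(ii).} Assume $T$ is right Drazin invertible. By Theorem \ref{left} there exists $(M,N)\in Red(T)$ such that $T_M$ is right invertible and $T_N$ is nilpotent. A quick computation with $T=T_M\oplus T_N$ shows $d(T)$ equals the degree of nilpotency of $T_N$, so for every $n\ge d(T)$ we have $T^n=T_M^n\oplus 0_N$, whence
$\R(T^n)=\R(T_M^n)=M$ (since $T_M^n$ is right invertible, hence surjective) and $\N(T^n)=\N(T_M^n)\oplus N$. Now $M$ is topologically complemented in $X$ by $N$; and since $T_M^n$ is right invertible, $\N(T_M^n)$ is topologically complemented in $M$. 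Applying Lemma \ref{GM} I conclude that $\N(T^n)=\N(T_M^n)\oplus N$ is topologically complemented in $X$ for every $n\ge d(T)$.

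\textbf{(ii)$\Longrightarrow$(iii).} Immediate.

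\textbf{(iii)$\Longrightarrow$(i).} Suppose $d(T)<\infty$ and that $\N(T^{n+1})$ and $\R(T^n)$ are topologically complemented for some $n\ge d(T)$. The core observation is the identity
\[
\N(T)\cap \R(T^n)=T^n(\N(T^{n+1})),
\]
which holds since $T^{n+1}x=0$ is equivalent to $T(T^nx)=0$. I now apply Lemma \ref{l3} to the operator $T^n$ with the complemented subspace $\N(T^{n+1})$: the hypotheses $\R(T^n)$ complemented, $\N(T^{n+1})$ complemented, and $\N(T^n)\subseteq \N(T^{n+1})$ are all satisfied, so the conclusion yields that $T^n(\N(T^{n+1}))$ is topologically complemented in $X$. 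Finally, since $n\ge d(T)$ we have $\R(T^n)=\R(T^{d(T)})$, hence
\[
\N(T)\cap \R(T^{d(T)})=\N(T)\cap \R(T^n)=T^n(\N(T^{n+1}))
\]
is topologically complemented, so $T$ is right Drazin invertible by definition.

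The main conceptual step — and the only one that is not purely bookkeeping — is spotting the dual identity $\N(T)\cap \R(T^n)=T^n(\N(T^{n+1}))$ and recognizing it as the exact shape required to feed Lemma \ref{l3} (applied to $T^n$). Everything else is a routine transcription of the argument for Theorem \ref{t1}, with the roles of kernels/ranges and of Lemmas \ref{comp2}/\ref{l3} swapped.
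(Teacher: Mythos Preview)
Your proof is correct and follows essentially the same approach as the paper's own proof: the same Kato decomposition for (i)$\Rightarrow$(ii), and for (iii)$\Rightarrow$(i) the same key identity $T^n(\N(T^{n+1}))=\N(T)\cap\R(T^n)$ followed by the same application of Lemma \ref{l3} to $T^n$ with $M=\N(T^{n+1})$. Your write-up is slightly more explicit (you spell out that Lemma \ref{l3} is being applied to $T^n$ and that $d(T)$ equals the nilpotency degree of $T_N$), but the argument is the same.
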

\begin{proof} (i)$\Longrightarrow$(ii)
If $ T $ is right Drazin invertible, then by Theorem \ref{left}, there exist  closed $ T$-invariant subspaces $ M $ and $ N $ such that $ X = M \oplus N $ and  $ T = T_{M} \oplus T_{N} $ where $T_{M} $ is  right invertible  and $T_{N}^q =0 $ where $ q := d(T) $. It follows that $ T^n = T_M^n \oplus 0 $ for every $n\ge q$. Hence, $ \mathcal{R}(T^{n}) = M $ which is topologically complemented and $ \mathcal{N}(T^{n}) = \mathcal{N}(T_M^{n}) \oplus N $. Since $ T_M^{n} $ is right invertible, $\mathcal{N}(T_M^{n}) $ is topologically complemented in $ M $ so that  $ \mathcal{N}(T^{n}) $ is topologically complemented in $X$ via  Lemma \ref{mine}.

(ii)$\Longrightarrow$(iii) It is obvious.

(iii)$\Longrightarrow$(i)
Assume that $  d(T) < \infty $ and that $ \mathcal{N}(T^{n+1}) $ and $ \mathcal{R}(T^{n}) $ are topologically complemented for some $n\ge d(T)$. Observe that
\begin{eqnarray*}
  T^n(\mathcal{N}(T^{n+1}))    &=& \mathcal{N}(T) \cap \mathcal{R}(T^{n})  \\ &=& \mathcal{N}(T) \cap \mathcal{R}(T^{d(T)}).
 \end{eqnarray*}
Now an application of Lemma \ref{l3} gives that $ \mathcal{N}(T) \cap \mathcal{R}(T^{d(T)}) $ is topologically complemented.
\end{proof}


 Now we shift our focus to essentially left and essentially right Drazin invertible operators. To this end, we introduce two key lemmas that provide sufficient conditions for a paracomplete subspace to be topologically complemented. These lemmas are essential not only for the results immediately ahead but also for several foundational results presented later.

\begin{lem}\label{main l}

Let $ T \in \mathcal{B}(X) $ and $ M $ be a paracomplete subspace (i.e. range of a bounded operator) of $ X $ such that $ M \cap \mathcal{N}(T) $ is topologically complemented. If $ T(M) $ is topologically complemented, then    $ M $ is also topologically complemented.
\end{lem}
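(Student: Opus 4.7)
The plan is to construct a bounded linear projection $\Pi \in \B(X)$ with range exactly $M$: since the range of a bounded projection is automatically closed and its kernel is a closed complement, this immediately yields that $M$ is topologically complemented.

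Fix closed complements $X = (M \cap \N(T)) \oplus K$ and $X = T(M) \oplus L$, with continuous projections $Q \in \B(X)$ onto $M \cap \N(T)$ and $P \in \B(X)$ onto $T(M)$. A routine use of the modular law shows that $M = (M \cap \N(T)) \oplus (M \cap K)$ as vector spaces, and that the restriction $T_0 := T|_{M \cap K} : M \cap K \to T(M)$ is a linear bijection. Injectivity follows from $(M \cap \N(T)) \cap K = \{0\}$, while surjectivity follows by writing any $m \in M$ as $Qm + (I-Q)m$ with $Qm \in M \cap \N(T)$ and $(I-Q)m \in M \cap K$, giving $Tm = T((I-Q)m)$.

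The crux is to promote $T_0$ to a topological isomorphism. Using paracompleteness of $M$, pick $A \in \B(Z, X)$ with $Z$ Banach and $R(A) = M$, and set $\tilde A := (I - Q)A \in \B(Z, X)$. One verifies immediately that $R(\tilde A) = M \cap K$ and $T\tilde A = TA$, since $QA$ takes values in $\N(T)$. Because $T(M)$ is closed in $X$, the map $TA : Z \to T(M)$ is a bounded surjection of Banach spaces, so the open mapping theorem furnishes $C > 0$ such that every $t \in T(M)$ is of the form $TAz$ with $\|z\| \leq C\|t\|$. Injectivity of $T_0$ forces $\tilde A z = T_0^{-1}(t)$, which gives $\|T_0^{-1}(t)\|_X \leq C\|\tilde A\|\,\|t\|_X$. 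Hence $S := T_0^{-1}$ belongs to $\B(T(M), X)$.

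Finally, define $\Pi(x) := S(P(Tx)) + Qx$. Plainly $\Pi \in \B(X)$ and $R(\Pi) \subseteq M$, and for any $m \in M$ the identities $P(Tm) = Tm = T((I-Q)m)$ and $S(T((I-Q)m)) = (I-Q)m$ give $\Pi m = m$. Thus $\Pi$ is a bounded projection onto $M$, completing the proof. The main obstacle is the third paragraph, the boundedness of $T_0^{-1}$: a priori $M \cap K$ need not be complete in the subspace norm from $X$, so the open mapping theorem cannot be invoked directly for $T_0$. Paracompleteness of $M$ is exactly the device that allows us to transport the argument to the Banach space $Z$, where open mapping applies to $TA$, and then to carry the resulting estimate back to $X$.
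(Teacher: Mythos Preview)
Your proof is correct, but it proceeds along a genuinely different route from the paper's. The paper names an explicit candidate for the complement: with $T(M)\oplus N=X$ and $(M\cap\N(T))\oplus G=X$, it takes $G\cap T^{-1}(N)$, checks the algebraic direct sum by hand via the modular law, and then appeals to a result of Labrousse on paracomplete subspaces (\cite[Proposition~2.1.1]{lab}) to conclude that $M$ is closed. Your argument instead builds a bounded projection $\Pi$ onto $M$ by inverting $T$ on the ``good'' piece $M\cap K$; the delicate point---that $T_0^{-1}$ is bounded even though $M\cap K$ is not known a priori to be complete---you handle by lifting to the auxiliary Banach space $Z$ via $A$ and invoking the open mapping theorem for the surjection $TA:Z\to T(M)$. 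Closedness of $M$ then falls out for free as $R(\Pi)=\N(I-\Pi)$. Amusingly, the two constructions agree: computing $\ker\Pi$ in your setup gives exactly $K\cap T^{-1}(L)$, which is the paper's $G\cap T^{-1}(N)$. What your approach buys is self-containment---only the open mapping theorem is needed, no call to \cite{lab}---while the paper's approach makes the complement visible from the outset without passing through an auxiliary space.
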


\begin{proof}
Let $ N $ and $ G $  be closed subspaces of $ X $ such that $T(M) \oplus N = X  $ and $ (M \cap \mathcal{N}(T)) \oplus G = X $. We claim  that $ M $ is topologically complemented  in $ X $ by $ G \cap T^{-1}(N) $. Indeed from  $M \cap \mathcal{N}(T)\subset T^{-1}(N) $,   we have  according to \ref{low} that
\begin{eqnarray*}
  M + ( G \cap T^{-1}(N))    &=& M + (M \cap \mathcal{N}(T)) +  ( G \cap T^{-1}(N))  \\ &=&       M + (((M \cap \mathcal{N}(T))  + G) \cap T^{-1}(N)) \\ &=& M +( X \cap T^{-1}(N)) \\ &=& M + T^{-1}(N).
 \end{eqnarray*}
On the other hand, since $ T(M) \subset \mathcal{R}(T) $, it follows from \cite[Lemma 3.1 (iii)]{ZS} that $$ T^{-1}(T(M) + N) = T^{-1}(T(M)) + T^{-1}(N).   $$ But $$ T^{-1}(T(M)) = M + \mathcal{N}(T). $$ Therefore
\begin{eqnarray*}
 X &=&  T^{-1}(X) \\ &=&  T^{-1}(T(M) + N) \\ &=&  M + \mathcal{N}(T) + T^{-1}(N)  \\ &=& M + T^{-1}(N).
 \end{eqnarray*}
 Consequently,
 \begin{center}
 $   M + ( G \cap T^{-1}(N) )= X   $.
 \end{center}

 Now, if $ x \in M \cap (G  \cap T^{-1}(N)) $, then $ T(x) \in T(M) \cap N = \lbrace 0 \rbrace $ so that $ x \in \mathcal{N}(T) $. Therefore,  since $ M \cap G \cap \mathcal{N}(T) = \lbrace 0 \rbrace$, it follows that  $ x = 0 $, and whence $ X = M \oplus (G \cap T^{-1}(N)) $. Now to finish the proof, since $ G \cap T^{-1}(N) $ is closed, we only need to prove that $ M $ is closed. Towards this, we will apply some results from Chapter II in \cite{lab}. Since $ M $ and  $ \mathcal{N}(T) $ are paracomplete and since $ T^{-1}(T(M)) = M + \mathcal{N}(T) $ and $ M \cap \mathcal{N}(T) $ are closed, it follows from \cite[Proposition 2.1.1]{lab}  that $ M $ is closed too.
\end{proof}

\begin{lem}\label{main r}

Let $ T \in \mathcal{B}(X) $ and $ M $ be  paracomplete  such that $ M + \mathcal{R}(T) $ is topologically complemented. If $ T^{-1}(M) $ is topologically complemented, then   $ M $ is also topologically complemented.
\end{lem}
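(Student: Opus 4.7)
The plan is to dualize the proof of Lemma~\ref{main l}. Let $N$ and $G$ be closed subspaces of $X$ with $X = T^{-1}(M) \oplus N$ and $X = (M + \R(T)) \oplus G$, and I will show that $G + T(N)$ is a topological complement of $M$ in $X$.

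For the algebraic part, first $M \cap (G + T(N)) = \{0\}$: if $x = g + T(n)$ with $g \in G$, $n \in N$ lies in $M$, then $g = x - T(n) \in M + \R(T)$ forces $g \in G \cap (M + \R(T)) = \{0\}$; hence $x = T(n) \in M$ forces $n \in T^{-1}(M) \cap N = \{0\}$, so $x = 0$. For the sum, applying $T$ to $X = T^{-1}(M) + N$ gives $\R(T) = T(T^{-1}(M)) + T(N) = (M \cap \R(T)) + T(N) \subseteq M + T(N)$; hence $M + T(N) = M + \R(T)$, and therefore $M + G + T(N) = (M + \R(T)) + G = X$.

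The topological upgrade is the delicate point. A naive dual of Lemma~\ref{main l} would pair $M$ with $\R(T)$ in \cite[Proposition 2.1.1]{lab}, but $M \cap \R(T) = T(T^{-1}(M))$ need not be closed, so that route stalls. I will instead pair $M$ with $T(N)$: both are paracomplete ($T(N)$ being the range of the bounded operator $T|_N$ on the Banach space $N$); the sum $M + T(N) = M + \R(T)$ is closed since it is complemented; and the intersection $M \cap T(N) = \{0\}$ is trivially closed. Applying \cite[Proposition 2.1.1]{lab} then yields that both $M$ and $T(N)$ are closed.

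To conclude, let $P_1 : X \to M + \R(T)$ be the continuous projection along $G$. Inside the Banach space $M + \R(T)$ I now have an algebraic direct sum $M \oplus T(N)$ of two closed subspaces, so by the closed graph theorem the projection $P_2 : M + \R(T) \to M$ along $T(N)$ is continuous. The composition $P := P_2 \circ P_1$ is a continuous projection of $X$ onto $M$ whose kernel is $G + T(N)$, exhibiting $M$ as topologically complemented. The main obstacle is precisely the identification of $T(N)$, rather than $\R(T)$, as the correct paracomplete partner of $M$ in Labrousse's proposition; once this switch is made the rest is formal.
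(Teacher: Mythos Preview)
Your proof is correct and follows essentially the same route as the paper: you identify $G+T(N)$ as the complement of $M$, verify the algebraic direct sum exactly as the paper does, and invoke \cite[Proposition 2.1.1]{lab} for the closure step. The only cosmetic difference is that the paper applies Labrousse's proposition to the pair $(M,\,T(N)\oplus G)$ directly (both paracomplete, sum $=X$, intersection $=\{0\}$) and concludes in one stroke that both summands are closed, whereas you apply it to the smaller pair $(M,\,T(N))$ inside $M+\R(T)$ and then assemble the bounded projection $P_2\circ P_1$ by hand; both variants are valid and equally short.
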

\begin{proof}
Let $ N $ and $ H $ be closed such that $  T^{-1}(M) \oplus N  = X $ and $  (M + \mathcal{R}(T)) \oplus H  = X $. We claim that $ M $ is topologically complemented in $ X $ by $ T(N) \oplus H$. Indeed,  first, one has
\begin{eqnarray*}
 T(T^{-1}(M ) + N) &=&  T(T^{-1}(M )) + T(N)  \\  &=& ( M \cap \mathcal{R}(T)) + T(N).
 \end{eqnarray*}
 Hence
 \begin{center}
     $ ( M \cap \mathcal{R}(T)) + T(N)  = \mathcal{R}(T)  $
 \end{center}
   Since $ M + \mathcal{R}(T) + H = X $, it follows that
\begin{eqnarray*}
 M  + (T(N) \oplus H)    &=&  M + (M \cap \mathcal{R}(T))+ T(N) + H \\ &=& M + \mathcal{R}(T) + H \\ &=& X.
 \end{eqnarray*}
  On the other hand, if $ x \in M \cap (T(N) \oplus H) $, then $ x = Tn + h $ for some $ n \in N $ and $ h \in H $. Hence, $ h = x - Tn $ so that $ h \in (M + \mathcal{R}(T)) \cap H = \lbrace 0 \rbrace $.  Therefore,    $ n \in T^{-1}(M) \cap  N = \lbrace 0 \rbrace $ which implies that $ x = 0 $. Summing up, $M \oplus (T(N) \oplus H) = X $. We shall lastly prove that $ M $ and $ T(N) \oplus H $ are closed. But this is a consequence of \cite[Proposition 2.1.1]{lab}  since   $ M $ and $ T(N) \oplus H $ are paracomplete (CH. II \cite{lab}) and  $ M + (T(N) \oplus H) $ and $ M \cap (T(N) \oplus H) $ are closed.
\end{proof}

\begin{thm}\label{t1 e dod}

Let $ T \in \mathcal{B}(X) $. The following statements are equivalent:
\begin{enumerate}[label={\upshape(\roman*)}]
\item   $ T  $ is essentially left Drazin invertible.
\item $ a_e(T) < \infty $ and $ \mathcal{N}(T^{n}) $ and $ \mathcal{R}(T^{n}) $ are topologically complemented for every $n\ge a_e(T) $.
\end{enumerate}
\end{thm}
\begin{proof} (i)$\Longrightarrow$(ii): Suppose that $ T $ is essentially  left Drazin invertible. Then $a_e(T)\le \dis(T)<\infty$. From the proof of the implication (iv)$\Longrightarrow$(v) in  \cite[Theorem 4.13]{ZS} it follows that there exist  closed $ T$-invariant subspaces $ M $ and $ N $ such that $ X = M \oplus N $ and  $ T = T_{M} \oplus T_{N} $ where $T_{M} $ is  left Fredholm and $T_{N}^d =0 $ for $ d := \dis(T) $. For every $n\ge d$ we have that $T^n=T_M^n\oplus 0_N$, and so  $\R(T^n)=\R(T_M^n)$  and $\N(T^n)=\N(T_M^n)\oplus N$. Since $T_M^n$ is left Fredholm it follows that $\R(T_M^n)$ and $\N(T_M^n)$ are topologically complemented in $M$. Applying Lemma \ref{mine}, we conclude that $ \mathcal{N}(T^{n}) $ and $ \mathcal{R}(T^{n}) $ are  topologically complemented in $X$ for every $n\ge d$.

 If $a_e(T)=d$, then we have that $ \mathcal{N}(T^{n}) $ and $ \mathcal{R}(T^{n}) $ are topologically complemented for every $n\ge a_e(T) $. Suppose that $a_e(T)<d$. Then $a_e(T)\le d-1$ and hence $\alpha_{d-1}=\dim (\N(T)\cap \R(T^{d-1})<\infty$. Consequently, the subspace $\N(T)\cap \R(T^{d-1})$ is topologically complemented. Since $\R(T^{d-1})$ is paracomplete and $T(\R(T^{d-1}))=\R(T^{d})$ is topologically complemented, from
Lemma \ref{main l} it follows that   $\R(T^{d-1})$ is topologically complemented. By repeating this method we can conclude that $\R(T^n)$ is topologically complemented for every  $n\ge a_e(T)$.
On the other side, from the equivalence (i)$\Longleftrightarrow$(iii) in  \cite[Theorem 4.11]{ZS} it follows that  that $\R(T)+\N(T^{d-1})$ is topologically complemented. As $\N(T^{d-1})$ is paracomplete and since $T^{-1}(\N(T^{d-1}))=\N(T^d)$ is topologically complemented, Lemma \ref{main r} ensures that $\N(T^{d-1})$ is topolo\-gically complemented. By repeating this method we can conclude that $\N(T^n)$ is complemented for every  $n\ge a_e(T)$.


(ii)$\Longrightarrow$(i): It follows from the equivalence (i)$\Longleftrightarrow$(x) in \cite[Theorem 4.13]{ZS}.
\end{proof}

\begin{thm}\label{t2 e dod}
Let $ T \in \mathcal{B}(X) $. The following statements are equivalent:
\begin{enumerate}[label={\upshape(\roman*)}]
\item   $ T  $ is essentially  right Drazin invertible.
\item $ d_e(T) < \infty $ and $ \mathcal{N}(T^{n}) $ and $ \mathcal{R}(T^{n}) $ are topologically complemented for every $n\ge d_e(T) $.
\item $  d_e(T) < \infty $ and there exists $n\ge d_e(T)$ such that $ \mathcal{N}(T^{ n+1}) $ and $ \mathcal{R}(T^{ n}) $ are topologically complemented.
\end{enumerate}
\end{thm}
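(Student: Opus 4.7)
The plan is to model the argument after the proof of Theorem \ref{t1 e dod}, with the roles of left and right Fredholm swapped and $a_e(T)$ replaced by $d_e(T)$.

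For (i)$\Longrightarrow$(ii), I would first invoke the Kato-type decomposition for essentially right Drazin invertible operators (the right analog of the decomposition used in the proof of Theorem \ref{t1 e dod}, established in \cite{ZS}, e.g.\ via Theorem 4.12 there) to obtain a pair $(M,N)\in Red(T)$ with $T_M$ right Fredholm and $T_N$ nilpotent of degree $d:=\dis(T)$. For every $n\ge d$, $T^n=T_M^n\oplus 0_N$, so $\R(T^n)=\R(T_M^n)$ and $\N(T^n)=\N(T_M^n)\oplus N$. Since $T_M^n$ is right Fredholm, both $\R(T_M^n)$ and $\N(T_M^n)$ are topologically complemented in $M$, and Lemma \ref{GM} then gives that $\R(T^n)$ and $\N(T^n)$ are complemented in $X$ for all $n\ge d$.

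The remaining work is to descend from $n=d$ down to $n=d_e(T)$ when $d_e(T)<d$. Proposition \ref{t1 e dod r} ensures that $\N(T)\cap\R(T^n)$ and $\R(T)+\N(T^n)$ are topologically complemented for every $n\ge d_e(T)$. For the range chain, I would apply Lemma \ref{main l} to $M=\R(T^{n})$: this subspace is paracomplete (being the range of a bounded operator), $M\cap\N(T)=\N(T)\cap\R(T^n)$ is complemented by Proposition \ref{t1 e dod r}, and $T(M)=\R(T^{n+1})$ is complemented by the inductive hypothesis; hence $\R(T^n)$ is complemented. For the kernel chain, I would apply Lemma \ref{main r} to $M=\N(T^{n})$: it is paracomplete (being closed), $M+\R(T)=\R(T)+\N(T^n)$ is complemented by Proposition \ref{t1 e dod r}, and $T^{-1}(M)=\N(T^{n+1})$ is complemented by the inductive hypothesis; hence $\N(T^n)$ is complemented. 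Iterating this argument finishes (i)$\Longrightarrow$(ii).

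The implication (ii)$\Longrightarrow$(iii) is immediate, and (iii)$\Longrightarrow$(i) follows directly from the equivalence (i)$\Longleftrightarrow$(iii) in \cite[Theorem 4.12]{ZS}, since complementation of $\N(T^{n+1})$ together with $\R(T^n)$ implies, via the identity $\N(T)\cap\R(T^n)=T^n(\N(T^{n+1}))$ and Lemma \ref{l3}, that $\N(T)\cap\R(T^{d(T)})$-type subspaces are complemented (this is exactly the argument used in Theorem \ref{t2} which I would mirror here, using the essentially right Drazin hypothesis). The main obstacle is simply bookkeeping: checking at each descent step that the paracompleteness hypothesis and the auxiliary complementation hypotheses of Lemmas \ref{main l} and \ref{main r} are correctly in place, since the structural argument is parallel to the essentially left Drazin case already treated.
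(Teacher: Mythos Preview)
Your proposal is correct and follows essentially the same route as the paper's proof: decompose via \cite{ZS} into right Fredholm $\oplus$ nilpotent to get complementation for $n\ge\dis(T)$, then descend to $n\ge d_e(T)$ using Proposition \ref{t1 e dod r} together with Lemmas \ref{main l} and \ref{main r}; for (iii)$\Longrightarrow$(i), apply Lemma \ref{l3} exactly as in Theorem \ref{t2} and finish with the characterization in \cite[Theorem 4.12]{ZS}. The only minor slips are citation labels: the Kato-type decomposition into right Fredholm and nilpotent parts is \cite[Theorem 4.15]{ZS} (not 4.12), and in the final step the paper invokes the equivalence (i)$\Longleftrightarrow$(iv) of \cite[Theorem 4.12]{ZS} rather than (i)$\Longleftrightarrow$(iii).
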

\begin{proof} (i)$\Longrightarrow$(ii):
 Let $d=\dis(T)$. From the proof of the implication (iv)$\Longrightarrow$(v) in  \cite[Theorem 4.15]{ZS} it follows that there exist  closed $ T$-invariant subspaces $ M $ and $ N $ such that $ X = M \oplus N $ and  $ T = T_{M} \oplus T_{N} $ where $T_{M} $ is  right Fredholm and $T_{N}^d =0 $.
  Now from  Lemma \ref{mine},  similarly to the proof of the implication (i)$\Longrightarrow$(ii) in Theorem \ref{t1 e dod}, we conclude that $ \mathcal{N}(T^{n}) $ and $ \mathcal{R}(T^{n}) $ are  topologically complemented for every $n\ge d $.

  If $d_e(T)=d$, then we have that $ \mathcal{N}(T^{n}) $ and $ \mathcal{R}(T^{n}) $ are topologically complemented for every $n\ge d_e(T) $. Suppose that $d_e(T)<d$. Then $d_e(T)\le d-1$ and from the equivalence (i)$\Longleftrightarrow$(iii) in \cite[Theorem 4.12]{ZS} it follows that  the subspace $\N(T)\cap \R(T^{d-1})$ is topologically complemented. As in the proof of Theorem \ref{t1 e dod}, by using Lemma
   \ref{main l} we get  that   $\R(T^{d-1})$ is topologically complemented. By repeating this method we can conclude that $\R(T^n)$ is complemented for every  $n\ge d_e(T)$.

  Further, from \cite[Proposition 4.10]{ZS} it follows that $\R(T^{d})$ is closed and hence $T^{-(d-1)}(\R(T^d))=\R(T)+\N(T^{d-1})$ is closed. Since  $\beta_{d-1}=\codim (\R(T)+ \N(T^{d-1})<\infty$, we conclude that  the subspace $\R(T)+ \N(T^{d-1})$ is topologically complemented. As in the proof of Theorem \ref{t1 e dod},  Lemma \ref{main r} ensures that $\N(T^{d-1})$ is topolo\-gically complemented. By repeating this method we can conclude that $\N(T^n)$ is complemented for every  $n\ge d_e(T)$.

(ii)$\Longrightarrow$(iii): It is obvious.

(iii)$\Longrightarrow$(i): Suppose that $  d_e(T) < \infty $ and let  $ \mathcal{N}(T^{ n+1}) $ and $ \mathcal{R}(T^{ n}) $ be topologically complemented  for some $n\ge d_e(T)$.
 Since $ T^n(\mathcal{N}(T^{n+1})) = \mathcal{N}(T) \cap \mathcal{R}(T^{n})$ and  $ \mathcal{N}(T^{n}) \subseteq \mathcal{N}(T^{n+1}) $,
applying Lemma  \ref{l3}  we obtain that $ \mathcal{N}(T) \cap  \mathcal{R}(T^n) $ is topologically complemented. Using the equivalence (i)$\Longleftrightarrow$(iv) in \cite[Theorem 4.12]{ZS} we conclude that  $ T  $ is essentially  right Drazin invertible.
\end{proof}

In the final part of this section, we establish a distinct characterization of  Saphar type operators  by demonstrating a new decomposition that contrasts with the classical Kato decomposition.

 \begin{lem}\label{result1}
Let $T \in \mathcal{B}(X)$ and  let $ M $ and $ N $ be two subspaces of $ X$ such that $(M, N)\in Red(T)$ where $T_M$ is Kato and $ T_N $ is nilpotent.  Then there exist two operators $S, R \in \mathcal{B}(X)$ such that $T= S+R$, $SR =RS =0$,   $ R$ is nilpotent and for all $ n \geq 1 $:
    \begin{eqnarray}
   \mathcal{N}(S) \cap \mathcal{R}(S^n) &=& \mathcal{N}(T_M),\label{nov1}\\
   \mathcal{R}(S)+ \mathcal{N}(S^n) &=& \mathcal{R}(T_M) \oplus N.\label{nov2}
 \end{eqnarray}
If in particular $ T$ is of Saphar type, then $ S $ is also of Saphar type.
   \end{lem}
\begin{proof}
    Let $ S = T_M \oplus 0_N$ and  $ R = 0_M\oplus T_N$. Obviously, $ R$ is nilpotent and $\mathcal{N}(S^n)  = \mathcal{N}(T_M^n) \oplus   N $ and $ \mathcal{R}(S^n) =\mathcal{R}(T_M^n)$. Since $ T_M$ is Kato,     (\ref{nov2})  follows immediately and (\ref{nov1}) follows   via using (\ref{low}). The final assertion follows easily from the construction of $ S$.

\end{proof}

  \begin{prop}\label{stability}
Let $T \in \mathcal{B}(X)$. If there exist two operators $S, R \in \mathcal{B}(X)$ such that $T= S+R$  and $SR =RS =0$, then
 \begin{enumerate}[label={\upshape(\roman*)}]
\item $\mathcal{N}(T) \cap \mathcal{R}(S^n) = \mathcal{N}(S) \cap \mathcal{R}(S^n) $ for all $ n \in \mathbb N $.
\item $\mathcal{R}(T) + \mathcal{N}(S^n) = \mathcal{R}(S) + \mathcal{N}(S^n)$ for all $n \in \mathbb N.$

\end{enumerate}
   \end{prop}

\begin{proof}
(i)  Let $n \in \mathbb N $ and let $y \in \mathcal{N}(S) \cap \mathcal{R}(S^n).$ Then $ Sy = 0 $ and $ y = S^nx$ for some $ x \in X$ so that $ Ty = (S + R)S^nx = S^{n+1}x + RS^nx $. Since
 $RS^n= 0$, it follows  that $ Ty = Sy = 0$, i.e. $ y \in \mathcal{N}(T) $. Consequently,  $y \in \mathcal{N}(T) \cap \mathcal{R}(S^n).$ A similar line of reasoning yields the converse inclusion.

(ii)  Fix $n \in \mathbb N $ and let
$y \in \mathcal{R}(S) + \mathcal{N}(S^n).$ Then for some $z \in \mathcal{N}(S^n) $ and $x \in X$, one has $y-z =  Sx=Tx-Rx$
so that $y-z+Rx \in \mathcal{R}(T).$ On the other hand, since
 $S^nR= 0$, it follows that   $Rx \in \mathcal{N}(S^n)$ and therefore  $y \in \mathcal{R}(T) + \mathcal{N}(S^n).$ The converse inclusion can be demonstrated in a similar way.
\end{proof}

   \begin{cor}\label{result3}
Let $T \in \mathcal{B}(X)$. If there exist two operators $S, R \in \mathcal{B}(X)$ such that $T= S+R$,  $SR =RS =0$ and $ R$ is nilpotent of degree $ d $, then
 \begin{enumerate}[label={\upshape(\roman*)}]
\item $\mathcal{N}(T) \cap \mathcal{R}(T^n) = \mathcal{N}(S) \cap \mathcal{R}(S^n) $ for all $ n \geq d $.
\item $\mathcal{R}(T) + \mathcal{N}(T^n) = \mathcal{R}(S) + \mathcal{N}(S^n)$ for all $n \geq d.$

\end{enumerate}
   \end{cor}
\begin{proof}
      From the binomial theorem,  for all $ n \in \mathbb N $  we have that
$
T^n = \break (S+R)^n  =  \sum_{k=0}^n \binom nk S^k R^{n-k}   = R^n + \sum_{k=1}^{n-1} \binom nk S^k R^{n-k} + S^n.
$
In the last expression,   the middle sum is zero as $SR= RS = 0$. Consequently,   $T^n=S^n $ for all $n\geq d$, and applying Proposition  \ref{stability} we obtain the equalities  (i) and (ii).
\end{proof}
Consequently, we derive the following result as an immediate application of Corollary \ref{result3}.
\begin{cor}\label{result4}
Let $T \in \mathcal{B}(X)$. If there exist two operators $S, R \in \mathcal{B}(X)$ such that $T= S+R$,  $SR =RS =0$ and $ R$ is nilpotent of degree $ d $, then $ \dis(T) < \infty $ if and only if $ \dis(S) < \infty $. We have the same result for the ascent, the essential ascent, the descent and the essential descent of $ T$ and $ S$.

\end{cor}

 \begin{prop}\label{Kato_left_right}
Let $ T \in \mathcal{B}(X) $. Then, equivalent are:
\begin{enumerate}[label={\upshape(\roman*)}]
    \item $ T  $ is essentially left (right) Drazin invertible.
    \item There exist  a pair $(M,N) \in Red(T)$   such that  $T_{M} $ is  Kato and  left (right) Fredholm  and  $T_{N} $ is  nilpotent.
\end{enumerate}
   \end{prop}

\begin{proof}
(i)$\Longrightarrow$(ii)
According to Theorem \ref{Kato_essen}, there exists a pair $(M,N) \in Red(T)$  where $T_{M} $ is  left (right) Fredholm and $T_{N} $ is nilpotent. Since $T_M \in \mathcal{B}(M)$ is left (right) Fredholm, it follows from  \cite[Theorem 16.21]{Mull} and \cite[Lemma 2.1]{ZSR} that there exist closed subspaces $ M_1 $ and $ M_2$   of $M$   such that, $ \dim(M_2) < \infty$ and $(M_1, M_2)\in Red(T_M)$ where $T_{M_1} $ is  Kato and left (right) Fredholm and $T_{M_2} $ is nilpotent. Set $ G = M_1$ and $ H = M_2 \oplus N $ which is obviously closed. Then, clearly one has $(G, H)\in Red(T)$ where $T_{G} $ is  Kato and  left (right) Fredholm  and $T_{H} = T_{M_2} \oplus T_N$ is nilpotent.

(ii)$\Longrightarrow$(i) It follows from Theorem \ref{Kato_essen}.
\end{proof}

In what follows, we will denote the sets of Saphar type operators, left Drazin invertible operators, right Drazin invertible operators, essentially left Drazin invertible operators, and essentially right Drazin invertible operators respectively by \( S_i(X) \), where \( i \in \{1, 2, 3, 4, 5\} \). Additionally, the sets of Saphar type operators of degree \( \leq 1 \), left Drazin invertible operators with ascent \( \leq 1 \), right Drazin invertible operators with descent \( \leq 1 \), essentially left Drazin invertible operators with essential ascent \( \leq 1 \), and essentially right Drazin invertible operators with essential descent \( \leq 1 \) will be denoted respectively by \( S_i^1(X) \), where \( i \in \{1, 2, 3, 4, 5\} \).

\begin{thm}  Let $ T \in \mathcal{B}(X) $ and $ i \in \lbrace 1,2,3,4,5 \rbrace$. Then, the following  statements are equivalent:
\begin{enumerate}[label={\upshape(\roman*)}]
\item $T \in S_i(X) $.
\item There exist operators $ S, R \in \mathcal{B}(X) $ such that
 $ T = S + R $, $ SR = RS = 0 $,
 $ S \in S_{i}^1(X) $  and  $ R $ is nilpotent.

\item  There exist operators $ S, R \in \mathcal{B}(X) $ such that
 $ T = S + R $, $ SR = RS = 0 $,
 $ S \in S_i(X) $ and $ R $ is nilpotent.
\end{enumerate}
\end{thm}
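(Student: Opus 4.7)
The plan is to prove (i) $\Rightarrow$ (ii) $\Rightarrow$ (iii) $\Rightarrow$ (i); the first two implications are straightforward and the third is the substantive one.

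For (i) $\Rightarrow$ (ii), I exploit the Saphar type decomposition of $T$: there exists $(M,N) \in Red(T)$ with $T_M$ Saphar and $T_N$ nilpotent of some degree $d$. Setting $S := T_M \oplus 0_N$ and $R := 0_M \oplus T_N$, a direct block-matrix calculation yields $T = S + R$, $SR = RS = 0$, $R^d = 0$, and the fact that $S$ is of Saphar type via the same pair $(M,N)$ (since $T_M$ is Saphar and $0_N$ is nilpotent of degree at most $1$), with $\dis(S) = \max\{\dis(T_M), \dis(0_N)\} \le 1$. The implication (ii) $\Rightarrow$ (iii) is immediate.

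For (iii) $\Rightarrow$ (i), assume $T = S + R$ with $SR = RS = 0$, $S$ of Saphar type of degree $d := \dis(S)$, and $R$ nilpotent of degree $p$. The crucial observation is that $SR = RS = 0$ annihilates every cross term in the binomial expansion of $(S + R)^n$, so $T^n = S^n + R^n$ for all $n \ge 1$, and in particular $T^n = S^n$ whenever $n \ge p$. Also, $RS = 0$ gives $RS^n = 0$ for $n \ge 1$, so every $x \in \mathcal{R}(S^n)$ satisfies $Rx = 0$ and thus $Tx = Sx$. Combined with the general identity $\mathcal{R}(T) + \mathcal{N}(T^n) = T^{-n}(\mathcal{R}(T^{n+1}))$, these give, for $n \ge p$,
\begin{align*}
 \mathcal{R}(T^n) &= \mathcal{R}(S^n), \quad \mathcal{N}(T^n) = \mathcal{N}(S^n), \\
 \mathcal{N}(T) \cap \mathcal{R}(T^n) &= \mathcal{N}(S) \cap \mathcal{R}(S^n), \\
 \mathcal{R}(T) + \mathcal{N}(T^n) &= \mathcal{R}(S) + \mathcal{N}(S^n).
\end{align*}
Because $S$ is of Saphar type, the listed $S$-subspaces are closed and topologically complemented for $n \ge d$, with the latter two constant on that range. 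Consequently, for $n \ge K := \max\{p, d\}$, the corresponding $T$-subspaces are all topologically complemented and $T$ is quasi-Fredholm with $\dis(T) \le K$.

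To finish via the characterization \cite[Theorem 4.2]{ZS}, one must verify the closedness and complementation conditions at the exact value $n = \dis(T)$, which may be strictly smaller than $K$. The constancy of $\mathcal{N}(T) \cap \mathcal{R}(T^n)$ for $n \ge \dis(T)$ transfers its complementation from $n = K$ down to $n = \dis(T)$ for free. The remaining conditions (closedness and complementation of $\mathcal{R}(T^n)$ and $\mathcal{N}(T^n)$ for $\dis(T) \le n < K$, and complementation of $\mathcal{R}(T) + \mathcal{N}(T^{\dis(T)})$) I will obtain by propagating complementation downward from $n = K$ by an inductive application of Lemmas \ref{main l} and \ref{main r}, mirroring the proof of the implication (i) $\Rightarrow$ (ii) in Theorem \ref{t1 e dod}. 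This propagation step is the main technical obstacle, but it goes through exactly as in that earlier proof, after which \cite[Theorem 4.2]{ZS} delivers the conclusion.
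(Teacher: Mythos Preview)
Your argument for (i)$\Rightarrow$(ii) is fine (it is the explicit construction behind the reference \cite[Theorem 5.2]{mine2} that the paper cites), and for (iii)$\Rightarrow$(i) you correctly derive the key identities $T^n=S^n$, $\mathcal{N}(T)\cap\mathcal{R}(T^n)=\mathcal{N}(S)\cap\mathcal{R}(S^n)$ and $\mathcal{R}(T)+\mathcal{N}(T^n)=\mathcal{R}(S)+\mathcal{N}(S^n)$ for $n\ge K$, exactly as the paper does. The divergence is only in the final step, where you take a detour the paper avoids.

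The paper simply observes that once $\dis(T)\le K$, \emph{both} sequences $\mathcal{N}(T)\cap\mathcal{R}(T^n)$ and $\mathcal{R}(T)+\mathcal{N}(T^n)$ are constant for $n\ge\dis(T)$; the second constancy is a standard companion of the first (Grabiner's uniform-descent lemma: $\dim\frac{\mathcal{N}(T)\cap\mathcal{R}(T^n)}{\mathcal{N}(T)\cap\mathcal{R}(T^{n+1})}=\dim\frac{\mathcal{R}(T)+\mathcal{N}(T^{n+1})}{\mathcal{R}(T)+\mathcal{N}(T^n)}$). Hence complementation of both subspaces at $n=K$ transfers to $n=\dis(T)$ for free, and \cite[Theorem 4.2]{ZS} applies immediately. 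Note that \cite[Theorem 4.2]{ZS} does \emph{not} require $\mathcal{R}(T^n)$ or $\mathcal{N}(T^n)$ to be individually complemented; only closedness of the ranges is needed, and that also descends from $K$ to $\dis(T)$ by the standard quasi-Fredholm fact that closedness of one $\mathcal{R}(T^m)$ with $m\ge\dis(T)$ forces closedness of all of them.

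Your proposed propagation via Lemmas \ref{main l} and \ref{main r} is therefore unnecessary; worse, as sketched it is circular. To push $\mathcal{N}(T^n)$ down to $\mathcal{N}(T^{n-1})$ via Lemma \ref{main r} you need $\mathcal{R}(T)+\mathcal{N}(T^{n-1})$ complemented \emph{first}, but you list that very fact among the things to be obtained by propagation. In Theorem \ref{t1 e dod} this input comes from Proposition \ref{t1 e dod prop}, which is unavailable here; the only way to supply it is precisely the constancy of $\mathcal{R}(T)+\mathcal{N}(T^n)$ for $n\ge\dis(T)$---and once you have that, the whole propagation becomes redundant and the paper's direct argument goes through.
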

\begin{proof}
(i)$\Longrightarrow$(ii) Let  $ T \in S_i(X) $, $i\in\{1,\dots,5\}$. Then there exists a pair    $(M, N)\in Red(T)$ such that  $T_M$ is Saphar and $ T_N $ is nilpotent, and  moreover in the case that  $i=2 $ (resp., $i=3$, $i=4$, $i=5$) $T_M$ is left invertible (resp., right invertible, left Fredhom, right Fredholm) according to Theorem \ref{left}  and Proposition \ref{Kato_left_right}.
  According to Lemma \ref{result1}  there exist two operators $S, R \in \mathcal{B}(X)$ such that $T= S+R$,  $SR =RS =0$,  $ R$ is nilpotent, $ S $ is of Saphar type and for all $ n \geq 1 $:
\begin{eqnarray}
   \mathcal{N}(S) \cap \mathcal{R}(S^n) &=& \mathcal{N}(T_M),\label{nov3}\\
   \mathcal{R}(S)+ \mathcal{N}(S^n) &=& \mathcal{R}(T_M) \oplus N.\label{nov4}
 \end{eqnarray}
If $ i=1$, obviously we have from (\ref{nov3}) that $ \dis(S) \leq 1 $. If $i=2$ (resp., i=4),  then 
from (\ref{nov3}) it follows  that $a(S)\le 1$ (resp.,   $a_e(S)\le 1$). If $i=3$ (resp., i=5
),  then
from (\ref{nov4}) it follows  that $d(S)\le 1$ (resp.,   $d_e(S)\le 1$).  Now applying  \cite[Corollary 4.23]{ZS} (resp., \cite[Corollary 4.24]{ZS}, \cite[Theorem 4.13]{ZS}, \cite[Theorem 4.15]{ZS}) we get  that $S\in S_i^1(X)$ for $i=2$ (resp., $i=3$, $i=4$, $i=5$).

(ii)$\Longrightarrow$(iii)  is trivial.

(iii)$\Longrightarrow$(i)  Suppose that $S\in S_i(X)$, $i\in\{1,\dots, 5\}$, $R$ is nilpotent, $SR=RS=0$ and $T=S+R$. Then S is of Saphar type.  From Corollary \ref{result3},   for all $n\ge r := \max\{d,\dis(S)\} $ (where $d$ is the degree of $ R$), one has that
$$ \mathcal{N}(T) \cap  \mathcal{R}(T^n) = \mathcal{N}(S) \cap  \mathcal{R}(S^{\dis(S)}),$$
Therefore, $T$ has  uniform descent for $n\ge r$ and $ \mathcal{N}(T) \cap  \mathcal{R}(T^r)$ is topologically complemented. Also, one has from Corollary \ref{result3} that    $$ \mathcal{R}(T)  + \mathcal{N}(T^r) = \mathcal{R}(S)  + \mathcal{N}(S^{\dis(S)}) $$ and hence $ \mathcal{R}(T)  + \mathcal{N}(T^r)$ is topologically complemented.
Applying \cite[Theorem 4.2]{ZS} we conclude that $T$ is of Saphar type. For $i=2$ (resp., $i=3, i=4, i=5$), as
$a(S)<\infty$ (resp., $ d(S)<\infty$, $a_e(S)<\infty$,  $ d_e(S)<\infty$),
 from Corollary \ref{result4},  it follows that $ a(T)<\infty$ (resp., $ d(T)<\infty$, $a_e(T)<\infty$, $ d_e(T)<\infty$). Now from \cite[Corollary 4.23]{ZS}  (resp., \cite[Corollary 4.24]{ZS}, \cite[Theorem 4.13]{ZS}, \cite[Theorem 4.15]{ZS}), we get  that $T\in S_i(X)$.
\end{proof}
\begin{rem}
    We shall note that the case $ i = 1$ in the last theorem complements  \cite[Theorem 5.2]{mine2}.
\end{rem}
\section{Iterates of (Essentially) Left and (Essentially) Right Drazin Invertible Operators}

The purpose of this section is to investigate the behavior of powers of (essentially) left and (essentially) right Drazin invertible operators.

 \begin{thm}\label{200} For $ T \in \mathcal{B}(X)$, the following statements are equivalent:
\begin{enumerate}[label={\upshape(\roman*)}]
\item $ T $ is (essentially) left Drazin invertible.
\item $ T^m $ is (essentially) left Drazin invertible for all $ m\in\NN $.
\item  $ T^m $ is (essentially) left Drazin invertible for some  $ m\in\NN $.
\end{enumerate}
\end{thm}
\begin{proof} (i)$\Longrightarrow$(ii):
If $ T $ is (essentially) left Drazin invertible, then by Theorem \ref{left} (\cite[Theorem 4.13]{ZS}),  there exist   closed $T$-invariant subspaces $ M $ and $ N $ of $X$   such that $ X = M \oplus N $  and  $ T = T_{M} \oplus T_{N} $ where $T_{M} $ is  left invertible (left Fredholm) and $T_{N} $ is nilpotent. For any  $ m \in\NN$, we have $ T^m = T_M^m \oplus T_N^m $. But $ T_M^m $ is left invertible (left Fredholm) and $ T_N^m $ is nilpotent. Thus, $ T^m $ is left Drazin invertible, again  by Theorem \ref{left} (\cite[Theorem 4.13]{ZS}).

(ii)$\Longrightarrow$(iii): It is obvious.

(iii)$\Longrightarrow$(i): Assume that $ T^m $ is left Drazin invertible for some  $ m \in\NN$.
 Let  $ d := a(T^m) $.
   Then  $ p := a(T) \leq md $. According to the equivalence (i)$\Longleftrightarrow$(iii) in Theorem \ref{t1}, to prove that $
T $ is left Drazin invertible it suffices to prove that  $ \mathcal{N}(T^{md}) $ and  $ \mathcal{R}(T^{md+1})$   are topologically complemented.
   From the fact that $ T^m $ is left Drazin invertible according to  the equivalence (i)$\Longleftrightarrow$(ii) in  Theorem \ref{t1} it follows that  $ \mathcal{N}((T^{m})^d)=\mathcal{N}(T^{md}) $ and $ \mathcal{R}((T^m)^{d+1})=\mathcal{R}(T^{md+m})$
 are topologically complemented.
 Notice that $ T^{m-1}(\mathcal{R}(T^{md+1})) = \mathcal{R}(T^{md+m}) $. Also,  since $ a(T) \leq md $, we know from \cite[Lemma 3.1]{kash}  that $ \mathcal{N}(T^{m-1}) \cap \mathcal{R}(T^{md+1}) = \lbrace 0 \rbrace $. As  $ \mathcal{R}(T^{md+1}) $ is paracomplete, an application of Lemma \ref{main l} gives that  $  \mathcal{R}(T^{md+1}) $ is topologically complemented.

 Suppose now that   $ T^m $ is essentially left Drazin invertible for some  $ m\in\NN $.
Then   $ d := a_e(T^m)<\infty $. From  the inequality  $\alpha_{mn}(T)\le \alpha_n(T^m)$, $n\in\NN$ \cite[p. 137]{Mull-Mb}, we have  that $a_e(T)\le ma_e(T^m)=md$.
On the other side,
 from  Theorem \ref{t1 e dod}  it follows that  $ \mathcal{N}((T^{m})^d)=\mathcal{N}(T^{md}) $ and $ \mathcal{R}((T^m)^{d+1})= \mathcal{R}(T^{md+m})$
 are topologically complemented. We have  that $$\dim  (\mathcal{N}(T^{m}) \cap \mathcal{R}(T^{md})) =\alpha_d(T^m)<\infty,$$ and hence $\dim  T(\mathcal{N}(T^{m}) \cap \mathcal{R}(T^{md})) <\infty$. Since $ T(\mathcal{N}(T^{m}) \cap \mathcal{R}(T^{md}))=\mathcal{N}(T^{m-1}) \cap \mathcal{R}(T^{md+1})$, we obtain that $\mathcal{N}(T^{m-1}) \cap \mathcal{R}(T^{md+1})$ is topologically  complemented.
   As in the previous part of the proof by using   Lemma \ref{main l}  we get  that  $  \mathcal{R}(T^{md+1}) $ is topologically complemented. Now from the equivalence (i)$\Longleftrightarrow$(x)  in \cite[Theorem 4.13]{ZS} we conclude that $
T $ is essentially left Drazin invertible.
\end{proof}

 \begin{thm}\label{100} For $ T \in \mathcal{B}(X)$, the following statements are equivalent:
\begin{enumerate}[label={\upshape(\roman*)}]
\item $ T $ is (essentially) right Drazin invertible.
\item $ T^m $ is (essentially) right Drazin invertible for all  $ m\in\NN $.
\item  $ T^m$ is (essentially) right Drazin invertible for some  $ m \in\NN $.
\end{enumerate}
\end{thm}
\begin{proof}
 (i)$\Longrightarrow$(ii): This can be proved   in a way similar  to the proof of the implication (i)$\Longrightarrow$(ii) in Theorem \ref{200}..

(ii)$\Longrightarrow$(iii): It is obvious.

(iii)$\Longrightarrow$(i): Let $ T^m $ be  right Drazin invertible for some  $ m \in\NN $.
 Let  $ d := d(T^m) $.
   Then we have that  $ d(T) \leq md $.
From the equivalence (i)$\Longleftrightarrow$(ii) in  Theorem \ref{t2} it follows that  $ \mathcal{N}((T^{m})^{d+1})=\mathcal{N}(T^{md+m}) $ and $ \mathcal{R}((T^m)^{d})= \mathcal{R}(T^{md})$
 are topologically complemented.
Since $ d(T) \leq md $,  from \cite[Lemma 3.2]{kash} it follows  that $ \mathcal{R}(T^{m-1}) + \mathcal{N}(T^{md+1}) = X $. As the subspace  $ \mathcal{N}(T^{md+1}) $ is   paracomplete and  $ T^{-(m-1)}(\mathcal{N}(T^{md+1})) = \mathcal{N}(T^{md+m}) $ is topologically complemented, an application of Lemma \ref{main r} gives that  $  \mathcal{N}(T^{md+1}) $ is topologically complemented.  According to  the equivalence (i)$\Longleftrightarrow$(iii) in Theorem \ref{t2} we conclude  that $
T $ is right Drazin invertible.

Now suppose that  $ T^m $ is essentially right Drazin invertible for some  $ m \in\NN $.
Then   $ d := d_e(T^m)<\infty $. From  \cite[Lemma 3]{Mull-Mb} it follows   that $d_e(T)\le md_e(T^m)=md$.
  By using the equi\-valence (i)$\Longleftrightarrow$(ii) in Theorem \ref{t2 e dod}  we obtain  that  $ \mathcal{N}((T^{m})^{d+1})=\mathcal{N}(T^{md+d}) $ and $ \mathcal{R}((T^m)^{d})= \mathcal{R}(T^{md})$
 are topologically complemented. Since $$\codim  (\mathcal{R}(T^{m}) +\mathcal{N}(T^{md})) =\beta_d(T^m)<\infty,$$ we have that $\codim\,  T^{-1}(\mathcal{R}(T^{m}) + \mathcal{N}(T^{md})) <\infty$. From \cite[Lemma 12]{Mull-Mb} it follows that $\mathcal{R}(T^{m}) + \mathcal{N}(T^{md})$ is closed and hence  $T^{-1}(\mathcal{R}(T^{m}) + \mathcal{N}(T^{md}))$ is closed. Since   $ T^{-1}(\mathcal{R}(T^{m}) + \mathcal{N}(T^{md}))=\mathcal{R}(T^{m-1}) + \mathcal{N}(T^{md+1})$, we get  that $\mathcal{R}(T^{m-1})+ \mathcal{N}(T^{md+1})$ is topologically  complemented.
   As in the previous part of the proof, Lemma \ref{main r}  ensures that  $  \mathcal{N}(T^{md+1}) $ is topologically complemented.
   Now  from the equivalence (i)$\Longleftrightarrow$(iii) in  Theorem  \ref{t2 e dod}, it follows that $
T $ is essentially right Drazin invertible.
\end{proof}

\paragraph{Author contributions}: All authors contributed to the study conception and
design. Also, all authors read and approved the final manuscript.

\paragraph{Competing Interests}: The authors declare that they have no financial interests.
\paragraph{Funding}: Sne\v zana \v C. \v Zivkovi\'c-Zlatanovi\'c is supported by
the Ministry of Education, Science and Technological Development, Republic
of Serbia, grant no. 451-03-68/2022-14/200124.

\paragraph{Data availability statement}: Data sharing is not applicable to this article as no new data were created or analyzed in this study.
\paragraph{Statements and Declarations}: Author have no conflict of interest to declare.

\end{document}